\newtheorem{Theorem}{\bf Theorem}
\newtheorem{proposition}[Theorem]{\bf Proposition}
\newtheorem{theorem}[Theorem]{\bf Theorem}
\newcommand{\be}{\begin{equation}}
\newcommand{\ee}{\end{equation}}
\def\hpic #1 #2 {\mbox{$\begin{array}[c]{l} 
\epsfig{file=#1,height=#2}\end{array}$}}
\def\wpic #1 #2 {\mbox{$\begin{array}[c]{l} 
\epsfig{file=#1,width=#2}\end{array}$}}
\begin{document}

\title[On the genesis of a determinantal identity]{On the genesis of a determinantal identity}

\author{Vijay Kodiyalam}
\address{The Institute of Mathematical Sciences, Chennai, India}
\email{vijay@imsc.res.in}
%\author{Srikanth Tupurani}

%\subjclass{}
%\date{January 1, 1994 and, in revised form, June 22, 1994.}

%\keywords{}

\begin{abstract} 
We show how to derive a $3 \times 3$ determinantal identity in $12$ indeterminates that gives an 
explicit version of a result of Mohan Kumar and Pavaman Murthy on completing unimodular rows.
\end{abstract}

\dedicatory{This paper is dedicated to V. S. Sunder on his 60th birthday.}
\maketitle

\section{Introduction}
 
Let $A$ be a commutative ring with identity.
A sequence $a_1,a_2,\cdots,a_n$ of elements
of $A$ is said to be a unimodular sequence
if the ideal $(a_1,\cdots,a_n)A = A.$ Given such a unimodular sequence, there is a
%are a pair of mutually dual
projective (in fact, stably free) $A$-module, say $P$,
% and $Q$ ($\ \cong P^*$), 
associated with it
and defined by the split short exact sequence:
%$$
%\CD
%0 @>>> R @> \Delta 
%%= {\begin{pmatrix} {\Delta}_1 \\ 
%% \vdots \\ {\Delta}_{r+1} \end{pmatrix}}  
%>>
%R^{r+1} @>>> N @>>> 0\\
%@. @V{X 
%%= \left(\begin{matrix} x_1\\ \vdots \\ x_n \end{matrix} \right)
%}
%VV @V {\left[ \begin{matrix}
%I \\ 0 \end{matrix} \right]}VV @V VV \\
%0 @>>> R^n @>A>> R^{n+r} @>>> M @>>> 0.
%\endCD
%$$
%
%$$
%\CD
%0 @>>>  R @>{\left( \begin{matrix} X \\ -{\Delta}\end{matrix} \right)}>> R^n \oplus R^{r+1} @>{\left( \begin{matrix}
%B & I\\
%C & 0
%%a_{11} & \hdots & a_{1n} &1 & 0& \hdots & 0\\
%%\vdots & &\vdots & \vdots & \vdots&& \vdots \\
%%a_{r+11}& \hdots & a_{r+1n}& 0 & 0 &&1\\
%%\vdots&&\vdots&\vdots &\vdots && \vdots \\
%%a_{n+r1} & \hdots & a_{n+rn} & 0 & 0 & \hdots &0 
%\end{matrix} \right)}>> R^{n+r} @>>> \frac{M}{N} @>>> 0,
%\endCD
%$$ 
$$
\CD
0 @>>> A @>{\left[ \begin{matrix} a_1 \\ \vdots \\ a_n \end{matrix} \right]}>> A^n @>>> P
@>>> 0.
\endCD
$$
%
%$$
%\CD
%0 @>>> Q @>>> A^n @>{\left[ \begin{matrix} a_1 & \cdots & a_n \end{matrix} \right]}>> A @>>> 0.
%\endCD
%$$
It is well known and easy to see that $P$
%(together with $Q$) 
is free exactly when the column
% (or
%equivalently column) 
matrix above can be
completed to an invertible matrix over $A$,
in which case the unimodular sequence is said to be completable.

It is an interesting problem to find conditions under which a unimodular sequence
is completable. The first such non-trivial
result was by Swan and Towber in \cite{SwnTwb1975} who showed that if $a,b,c$
is unimodular then $a^2,b,c$ is completable.
In particular, they showed that this
follows (easily) from the determinantal identity
$$
det
\left[ \begin{array}{ccc}
 a^2 & b+ar &  c-aq\\
  b& -r^2+bpr & p+qr+cpr \\
  c& -p+qr-bpq & -q^2-cpq 
\end{array}
\right] = (pa+qb+rc)^2,
$$
that holds for commuting indeterminates $a,b,c,p,q,r$.

This result of Swan and Towber had a beautiful 
generalisation due to Suslin in \cite{Ssl1977} who showed that if $a_0,a_1,\cdots,a_n$ is unimodular and $r_0,r_1,\cdots,r_n \in {\mathbb N}$, then $a_0^{r_0},a_1^{r_1},a_2^{r_2},\cdots,a_n^{r_n}$ is completable provided
$n!$ divides $r_0r_1\cdots r_n$. While
Suslin's proof is, in principle, constructive and yields a completion
of the unimodular row, I am not aware of a
determinantal identity that implies it.
Suslin's result was generalised conjecturally by Nori and a special case of this conjecture was proved by Mohan Kumar in \cite{Kmr1997}.

 The main result of
\cite{KmrMrt2010} is another such condition
that applies to a length $3$ unimodular sequence $a,b,c$ and states that it is completable
if $z^2+bz+ac=0$ has its roots in $A$. It is natural to ask whether this result is an easy consequence of some determinantal
identity and the main observation of this
paper is an affirmative answer to this question.

\section{The determinantal identity}

The following determinantal identity can
be checked directly by a computation or more
easily using a computer algebra system such
as {\em Macaulay 2}.

\begin{proposition}\label{det} For indeterminates
$g,h,j,k,s,t,u,v,w,x,y,z$, the following determinantal identity holds.
\begin{eqnarray*}
det
\left[
\begin{array}{ccc}
gj & g^2j^2x+h^2j^2z-hjv & g^2k^2x+h^2k^2z+hju \\
gk+hj& gjt-hkv & hku-gjs \\
hk& g^2j^2w+h^2j^2y+gkt & g^2k^2w+h^2k^2y-gks
\end{array}
\right] =\\ (g^2j^2s+g^2k^2t+h^2j^2u+h^2k^2v)(g^2j^2w+g^2k^2x+h^2j^2y+h^2k^2z).
\end{eqnarray*}
\end{proposition}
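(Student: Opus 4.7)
The identity asserts equality of two polynomials in $\mathbb{Z}[g,h,j,k,s,t,u,v,w,x,y,z]$, so it can in principle be checked by expansion (and as the author notes, by computer algebra). I would nonetheless aim for a structured argument that exposes why such an identity exists.

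The first thing I would do is substitute $\alpha = gj,\ \beta = gk,\ \gamma = hj,\ \delta = hk$, noting that these satisfy the single syzygy $\alpha\delta = \beta\gamma$. Under this substitution the first column becomes $(\alpha,\beta+\gamma,\delta)^{T}$, the right-hand side becomes the product
\[
(\alpha^{2} s + \beta^{2} t + \gamma^{2} u + \delta^{2} v)(\alpha^{2} w + \beta^{2} x + \gamma^{2} y + \delta^{2} z),
\]
and the entries in the last two columns become polynomials in $\alpha,\beta,\gamma,\delta$ and the eight remaining parameters. At this point the crucial observation is that each of columns $2$ and $3$ splits as a sum $\vec{p}^{W} + \vec{q}^{S}$, where $\vec{p}^{W}$ involves only $w,x,y,z$ and $\vec{q}^{S}$ involves only $s,t,u,v$.

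Next I would use multilinearity of the determinant in the last two columns to split $\det M$ into four pieces. Two of these -- the ``pure $W$'' piece $\det(\vec{c_1},\vec{p_{2}}^{W},\vec{p_{3}}^{W})$ and the ``pure $S$'' piece $\det(\vec{c_1},\vec{q_{2}}^{S},\vec{q_{3}}^{S})$ -- I would show vanish using only the relation $\alpha\delta=\beta\gamma$. The pure $W$ piece is immediate: its middle row is $(\beta+\gamma,0,0)$, and cofactor expansion reduces it to $(\beta+\gamma)$ times a $2\times 2$ determinant proportional to $\alpha^{2}\delta^{2}-\beta^{2}\gamma^{2}=0$. The pure $S$ piece is a short but similar calculation in which the relation $\alpha\delta = \beta\gamma$ is used twice to kill the three-variable monomials, and the remaining terms pair off.

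With the pure terms eliminated, $\det M$ equals the sum of the two cross determinants $\det(\vec{c_1},\vec{p_{2}}^{W},\vec{q_{3}}^{S}) + \det(\vec{c_1},\vec{q_{2}}^{S},\vec{p_{3}}^{W})$, which is automatically bilinear in $(s,t,u,v)$ and $(w,x,y,z)$, matching the shape of the right-hand side. The main obstacle is then purely computational: showing that this bilinear expression agrees, coefficient by coefficient, with the product of the two quadratic forms above. Since there are only $16$ monomials of the form $\text{(letter from }\{s,t,u,v\})\cdot(\text{letter from }\{w,x,y,z\})$ to match, and each coefficient is a polynomial in $\alpha,\beta,\gamma,\delta$ whose simplification invokes $\alpha\delta=\beta\gamma$, this is a finite check well suited to hand or, as the author advocates, to a computer algebra system such as \emph{Macaulay 2}.
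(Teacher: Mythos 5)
Your proposal is correct, and it is worth distinguishing it from what the paper actually does: the paper's entire proof of the Proposition is ``check directly or with \emph{Macaulay 2}'', so any organized hand verification is already a different (and more illuminating) route. Your change of variables $\alpha=gj$, $\beta=gk$, $\gamma=hj$, $\delta=hk$ subject to $\alpha\delta=\beta\gamma$ is legitimate in the direction you need it --- an identity proved in $\mathbb{Z}[\alpha,\beta,\gamma,\delta,\dots]/(\alpha\delta-\beta\gamma)$ specializes to the stated one under $\alpha\mapsto gj$, etc. --- and it in fact recovers the setting of the paper's Section 3, where the determinant is computed in $\mathbb{Z}[a,b,c,d,s,\dots,z]/(ad-bc)$ and found to factor as the product of the two quadratic forms. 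Your two vanishing claims check out: the pure $(w,x,y,z)$ piece has middle row $(\beta+\gamma,0,0)$ and its complementary $2\times2$ minor equals $(\alpha^2\delta^2-\beta^2\gamma^2)(xy-wz)=0$, while the pure $(s,t,u,v)$ piece collapses to $(sv-tu)(\alpha\beta\delta+\alpha\gamma\delta-\beta^2\gamma-\beta\gamma^2)$, which vanishes since $\alpha\delta=\beta\gamma$. The residual cross-term computation is genuinely bilinear in $(s,t,u,v)$ and $(w,x,y,z)$ as you say, and spot checks (e.g.\ the coefficients of $sw$ and $tz$ come out to $\alpha^4$ and $\beta^2\delta^2$ after one use of $\alpha\delta=\beta\gamma$) confirm agreement with the product form; you should, however, actually carry out or at least clearly assert the full $16$-coefficient verification, since as written that final step is still deferred. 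What your approach buys over the paper's one-line proof is a structural explanation of the factorization (the determinant is visibly a product of something depending only on $s,t,u,v$ with something depending only on $w,x,y,z$ once the pure pieces die); what it does not supply, and what the paper's Section 3 is really after, is an account of how one would ever write down such a matrix in the first place.
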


An immediate corollary of Proposition \ref{det} is the following theorem.

\begin{theorem}[Theorem 3 of \cite{KmrMrt2010}]\label{main} Let $A$ be a ring and
$a,b,c \in A$ be a unimodular sequence such that $z^2+bz+ac=0$ has its roots in $A$. Then,
$a,b,c$ is completable.
\end{theorem}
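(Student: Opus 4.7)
The plan is to specialize the identity of Proposition~\ref{det} so as to produce a $3\times 3$ matrix over $A$ with first column $(a,b,c)^T$ and determinant~$1$. The crucial observation is that, although the nine entries of the left-hand matrix and the right-hand side are a priori polynomials in $g,h,j,k$, every one of them can be rewritten as a polynomial in the four products $P := gj$, $Q := gk$, $R := hj$, $S := hk$: for instance $g^2j^2 = P^2$, $gjt = Pt$, $hju = Ru$, and inspection of the remaining entries confirms this. Since the subring $\mathbb{Z}[P,Q,R,S]$ of $\mathbb{Z}[g,h,j,k]$ is cut out by the single relation $PS - QR = 0$, the determinantal identity may be reinterpreted as a universal identity in the twelve variables $P,Q,R,S,s,t,u,v,w,x,y,z$, valid in any commutative ring whenever the specialization satisfies $PS = QR$.

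Given a root $\alpha \in A$ of $z^2+bz+ac$, I will set $\beta := -b-\alpha$, so that $\alpha+\beta = -b$ and $\alpha\beta = ac$. The specialization $P=a$, $Q=-\alpha$, $R=-\beta$, $S=c$ then satisfies $PS = ac = \alpha\beta = QR$. The first column $(P,\,Q+R,\,S)^T$ of the specialized matrix becomes exactly $(a,b,c)^T$, and the right-hand side becomes
\[
(a^2 s + \alpha^2 t + \beta^2 u + c^2 v)(a^2 w + \alpha^2 x + \beta^2 y + c^2 z).
\]
It therefore suffices to exhibit $s,t,u,v,w,x,y,z \in A$ making each factor equal to~$1$.

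Such choices exist as soon as $J := (a^2, \alpha^2, \beta^2, c^2)$ equals $A$. This is where unimodularity enters: in $A/J$ each of $a,c,\alpha,\beta$ has zero square and is therefore nilpotent, so $b = -(\alpha+\beta)$ is also nilpotent, whence $(a,b,c) \subseteq \mathrm{Nil}(A/J)$. But $(a,b,c)$ generates $A$, so $1$ is nilpotent in $A/J$, forcing $A/J = 0$ and $J = A$. I may then choose $s,t,u,v$ making the first factor equal $1$ and, independently, $w,x,y,z$ making the second equal $1$; the resulting specialized matrix has first column $(a,b,c)^T$ and determinant $1$, completing the unimodular row.

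The heart of the argument, and what I expect to be the hardest step to discover rather than verify, is the reformulation through the four products. A direct substitution approach would try to solve the system $gj=a$, $gk=-\alpha$, $hj=-\beta$, $hk=c$ for individual elements of $A$, but the rank-one $2\times 2$ matrix with rows $(a,-\alpha)$ and $(-\beta,c)$ need not split as an outer product over a general commutative ring. Passing to the Segre quotient $\mathbb{Z}[P,Q,R,S]/(PS-QR)$ bypasses this obstruction entirely, since one then only needs to specify the four products themselves and the eight auxiliary scalars.
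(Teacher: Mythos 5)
Your proposal is correct and follows essentially the same route as the paper: both pass to the subring $\mathbb{Z}[gj,gk,hj,hk]\cong\mathbb{Z}[W,X,Y,Z]/(WZ-XY)$, specialize the four products to $a,-\alpha,-\beta,c$ for roots $\alpha,\beta$ of $z^2+bz+ac$, and use unimodularity of $a^2,\alpha^2,\beta^2,c^2$ to make both factors on the right-hand side equal to $1$. Your nilpotence argument for $(a^2,\alpha^2,\beta^2,c^2)=A$ is just a spelled-out version of the paper's one-line observation, so the two proofs coincide in substance.
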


\begin{proof}
%Since $a,b,c$ is a unimodular sequence, there
%exist $p,q,r \in A$ with $pa+qb+rc=1$.
Let $\alpha,\beta \in A$ be the roots of 
$z^2+bz+ac=0,$ so that $\alpha+\beta=-b$ and
$\alpha\beta=ac.$
Consider now the polynomial ring ${\mathbb Z}[g,h,j,k]$ and its subring ${\mathbb Z}[gj,gk,hj,hk],$ which is easily seen to be isomorphic to ${\mathbb Z}[W,X,Y,Z]/(WZ-XY)$.
There is thus a ring homomorphism from ${\mathbb Z}[gj,gk,hj,hk]$ to $A$ taking
$gj$ to $a$, $gk$ to $-\alpha$,$hj$ to  $-\beta$ and 
$hk$ to $c$.

Now since $a,b,c$ is unimodular, so is
$a,\alpha,\beta,c$ and so also is $a^2,\alpha^2,\beta^2,c^2$. Thus choose
$s,t,u,v,w,x,y,z \in A$ such that $a^2s + \alpha^2t
+\beta^2u + c^2v = 1=a^2w + \alpha^2x
+\beta^2y + c^2z$.
% (We may choose $s=w,t=x,u=y,v=z$).
Finally, the image
of the $3 \times 3$ matrix of Proposition \ref{det} under the natural homomorphism to $A$ is seen to have determinant $1$ and
first column $a,b,c$, thus yielding an explicit completion of the unimodular sequence.
\end{proof}

\section{Derivation of the identity}

While the proof of Theorem \ref{main} is
mathematically complete and gives a third proof of the main result of \cite{KmrMrt2010} (their paper has two proofs), it is unsatisfying without an explanation of the genesis of the determinantal identity used. In this section,
we remedy this defect by explaining how such 
an identity was arrived at using the computer algebra system {\em Macaulay 2}.
Thus some arguments in this section depend
on the output of calculations of {\em Macaulay 2} and cannot be regarded as proofs
in the mathematical sense.
% However we isolate two facts that depend on such
%calculations and the rest can be derived.

We begin by recalling the second proof of
Theorem \ref{main} from \cite{KmrMrt2010}.
Let $A$ be the ring ${\mathbb Z}[a,b,c,d,p,q,r]/(ad-bc,pa+q(b+c)+rd-1)$ and
let $P$ be the stably-free $A$ module associated to the unimodular sequence
$a,b+c,d$. It suffices to see that $P$ is free.
Since $P$ is of
rank $2$, to show that $P$ is free, it
suffices to map it onto an 2-generated invertible
ideal, say $I$, of $A$. For then, the kernel
of this map is another invertible ideal, say
$J$, of $A$. Since $det(P) \cong A$ (by stable freeness), it follows that $IJ$ is principal and so $J \cong I^{-1}$.
Thus $P \cong I \oplus I^{-1}$ which is free
by a lemma of Swan.
They then show that $P$ maps onto the invertible ideal $I = (a,b)^2A$.
What we will do is to make each step and map
in their proof as explicit as possible.
We make our first appeal to {\em Macaulay 2}
to check that $A$ is an integral domain.
%One fact that we will need is that $A$
%is an integral domain. This can be checked
%by {\em Macaulay 2} and we will assume this.

They first observe that the ideal $(a,b)A$ is  an invertible (equivalently projective) ideal, for, localised at a maximal
ideal not containing either $a$ or $b$,
it becomes the unit ideal, while localising
at a maximal ideal not containing either $c$
or $d$ makes it principal and generated by
$b$ or $a$ respectively. However by unimodularity, no maximal ideal contains
all of $a,b,c,d$.
Next they note that $(a,b)^2A =
(a^2,b^2)A$ since this is true locally.
Alternatively, we may note that multiplying
the relation $pa+q(b+c)+rd=1$ by $ab$ and
using that $ad=bc$ gives $ab=a^2(pb+qd)+b^2(qa+rc).$

Next, we see that the inverse of the invertible ideal $(a,b)A$ is $(a,c)A$.
For, their product is $(a^2,ab,ac,bc)A =
(a^2,ab,ac,ad)A = a(a,b,c,d)A = aA \cong A$.
Thus the inverse of $I = (a^2,b^2)A$ is
$J = (a^2,c^2)A$ and their product is
$a^2A$.

We know now, in principle, that $A^2 \cong I \oplus J$. What we seek is an explicit
exact sequence
$$
\CD
0 @>>> J @>\iota>> A^2 @>\pi>> I @>>> 0,
\endCD
$$
together with a retract, say $\rho$, of $\iota$.
The surjection $\pi : A^2 \rightarrow I$ is easy to
describe. 
With $e_1,e_2$ denoting the standard basis
vectors in $A^2$, set 
$\pi(e_1) = a^2$
and $\pi(e_2) = b^2$. Let $K = ker(\pi).$

Now invoke {\em Macaulay 2} to see that $K$ is the submodule of $A^2$ generated by
the columns of the matrix
$$
\left[
\begin{array}{rr}
-b^2 & -d^2\\
a^2 & c^2
\end{array}
\right].
$$
%To prove this rigorously, note that the
%submodule above is clearly contained in $K$.
%Also, t
There is a well-defined (and clearly injective) map $J \rightarrow K$ defined by $x \mapsto ye_1+xe_2$ where $d^2x+c^2y=0$. %This uses the fact that
%$c$ is a non zero-divisor on $A$. 
It is easily checked that this map is an
isomorphism. Thus the map $\iota : J \rightarrow A^2$ is given by $\iota(x) = ye_1+xe_2$ with $y$ as above. Suppose that $\rho : A^2 \rightarrow J$ is a splitting of this map.
Let $\rho(e_1) = -(ta^2 + vc^2)$ and $\rho(e_2) = sa^2 + uc^2$.  A simple calculation now shows that a necessary and sufficient condition for $\rho \circ \iota$ to be $id_J$, is that
$sa^2+tb^2+uc^2+vd^2=1$. We know that
such $s,t,u,v$ certainly exist in $A$
so choose them. The map $\rho \oplus \pi : A^2 \rightarrow J \oplus I$ is then an
isomorphism.

The next step in the analysis is to construct an explicit isomorphism from
$J \oplus I$ to $P$.
% In the course of the
%proof we will need the following observation
%in order to calculate szygies. The observation is that $(b):_A(a) = (b,d)$.
%The $\supseteq$ containment is clear while
%the $\subseteq$ containment is proved locally at any maximal ideal $M$ by using
%that not all of $a,b,c,d$ are in $M$.
%Another fact that we will need in the proofs
%is that $(b,d):_A(a) = (b,d)$. This can be checked by {\em Macaulay 2}.
%
%
%
Since $P$ is, by definition, the quotient
of $A^3$ by the cyclic submodule generated by $ae_1 +(b+c)e_2+de_3$ (where $e_1,e_2,e_3$ are the standard basis vectors
of $A^3$), the surjection $A^3 \rightarrow I$ defined by
${e_1} \mapsto b^2$, ${e_2} \mapsto -ab$ and ${e_3} \mapsto a^2$ factors through $P$, say as $\tilde{\pi} : P \rightarrow I$.
{\em Macaulay 2} now shows that the kernel, say $L$,
of $A^3 \rightarrow I$ is generated by the columns
of the matrix:
$$
\left[
\begin{array}{rrrr}
c & -a & 0 &0\\
d & -b & -c &a\\
0 & 0 & -d & b
\end{array}
\right].
$$
(Actually, {\em Macaulay 2} gives 14 generators
but inspection shows that all are contained
in the submodule generated by these $4$).
%We may prove this given the two calculations
%of colon ideals of the previous paragraph.
Naturally $L$ contains the vector $ae_1 +(b+c)e_2+de_3$ and the quotient of $L$ by the cyclic submodule generated by this vector (which is a direct summand of $A^3$
and hence also of $L$) is the kernel of $\tilde{\pi}$. 
In other words, $ker(\tilde{\pi})$ is generated by $c\overline{e}_1 + d\overline{e}_2,
a\overline{e}_1+b\overline{e}_2 = -c\overline{e}_2-d\overline{e}_3$ and $a\overline{e}_2+b\overline{e}_3.$
We need to
explicitly identify this submodule of $P$ with $J$.

Now $J$ is generated by $c^2,a^2$ subject only to the relations given by the columns of
the matrix
$$
\left[
\begin{array}{rr}
a^2 & b^2\\
-c^2 & -d^2
\end{array}
\right].
$$
Thus there is a well-defined map $\tilde{\iota}: J \rightarrow P$ given by 
$\tilde{\iota}(c^2) = c\overline{e}_1+d\overline{e}_2,
\tilde{\iota}(a^2) = -a\overline{e}_2-b\overline{e}_3$. Using that $ac = a^2(pc+qd)+c^2(qa+rb),$ we see that $\tilde{\iota}(ac) = a\overline{e}_1+b\overline{e}_2$, and so
$\tilde{\iota}(J)=ker(\tilde{\pi})$.
Since we know that $ker(\tilde{\pi}) \cong J$ and a surjective endomorphism of a 
finitely generated module (over any commutative ring) is an isomorphism, $\tilde{\iota}$ is an isomorphism of $J$
onto $ker(\tilde{\pi})$.

Thus we now have an exact sequence
$$
\CD
0 @>>> J @>\tilde{\iota}>> P @>\tilde{\pi}>> I @>>> 0,
\endCD
$$
and we seek a section, say $\tilde{\sigma} : I \rightarrow P$, of $\tilde{\pi}.$
Since this seems difficult to split directly, we consider the above exact sequence tensored with $J$ which gives
$$
\CD
0 @>>> J^2 @>\hat{\iota}>> JP @>\hat{\pi}>> A @>>> 0,
\endCD
$$
which certainly is easier to split.
Here $\hat{\iota} = \tilde{\iota}|_{J^2}$ while $\hat{\pi} = a^{-2}\tilde{\pi}|_{JP}.$
Note that under $\hat{\pi}$, $a^2\overline{e}_1 \mapsto b^2$, $a^2\overline{e}_3 \mapsto a^2$, $c^2\overline{e}_1 \mapsto d^2$ and $c^2\overline{e}_3 \mapsto c^2$.
Thus a splitting of $\hat{\pi}$ is given
by the map $\hat{\sigma}: A \rightarrow JP$
given by $\hat{\sigma}(1) = xa^2\overline{e}_1 + wa^2\overline{e}_3
+zc^2\overline{e}_1 + yc^2\overline{e}_3,$
where $w,x,y,z$ satisfy $wa^2+xb^2+yc^2+zd^2=1$. Now tensor
with $I$ to get a splitting of our original
sequence. Thus, $\tilde{\sigma}$ is specified by
\begin{eqnarray*}
\tilde{\sigma}(a^2) &=& a^2(x\overline{e}_1 + w\overline{e}_3) + c^2(z\overline{e}_1 + y\overline{e}_3),{\text {\ \ \ and}}\\ \tilde{\sigma}(b^2) &=& b^2(x\overline{e}_1 + w\overline{e}_3) + d^2(z\overline{e}_1 + y\overline{e}_3).
\end{eqnarray*}
Finally conclude that the map $\tilde{\iota} + \tilde{\sigma}: J \oplus I \rightarrow P$ is an isomorphism.

Composing the isomorphisms of $A^2$ to $J \oplus I$ and $J \oplus I$ to $P$ we get
an explicit isomorphism $A^2 \rightarrow P$
given by:
\begin{eqnarray*}
e_1 &\mapsto& (a^2x+c^2z-cv)\overline{e}_1 + (at-dv)\overline{e}_2 + (a^2w+c^2y+bt)\overline{e}_3\\
e_2 &\mapsto& (b^2x+d^2z+cu)\overline{e}_1 + (du-as)\overline{e}_2 + (b^2w+d^2y-bs)\overline{e}_3
\end{eqnarray*}

It follows that an explicit completion of
the unimodular sequence $a,b+c,d$ is given by the matrix:
$$
\left[
\begin{array}{ccc}
a & a^2x+c^2z-cv & b^2x+d^2z+cu \\
b+c& at-dv & du-as \\
d& a^2w+c^2y+bt & b^2w+d^2y-bs
\end{array}
\right].
$$
And indeed, when we calculate the determinant of the above matrix in the ring $A$, it is $1$. 

Noticing that the elements $p,q,r$ of
$A$ do not occur in the above matrix we
hence try to compute the determinant in
the ring ${\mathbb Z}[a,b,c,d,s,t,u,v,w,x,y,z]/(ad-bc).$ The answer turns out to be
$(a^2s+b^2t+c^2u+d^2v)(a^2w+b^2x+c^2y+d^2z)$. Since the subring ${\mathbb Z}[a,b,c,d]$ of this ring
is isomorphic to ${\mathbb Z}[gj,gk,hj,hk]$, it follows that the determinantal identity of Proposition \ref{det} holds.

%We conclude with one tantalizing question.
%In \cite{KmrMrt2010} it is also shown that
%if $a_1,\cdots,a_n$ is a unimodular sequence
%in $A$ such that the homogeneous polynomial
%$a_1X^n+a_2X^{n-1}Y+\cdots+a_nY^n$ has
%a linear factor, then it is completable.
%Observe now that the ring 
%${\mathbb Z}[gj_1,gj_2,\cdots,gj_n,hj_1,hj_2,\cdots,hj_n]$ maps into $A$ with $gj_1 \mapsto a_1, hj_1+gj_2 \mapsto a_2, hj_2+gj_3 \mapsto a_3, \cdots,
%hj_{n-2}+gj_{n-1} \mapsto a_{n-1},hj_n \mapsto a_n.$

%There is also a well-defined injection $\tilde{i} : J \rightarrow P$ defined by
%$\tilde{i}(c^2) = c\overline{e_1}+d\overline{e_2},
%\tilde{i}(a^2) = a\overline{e_2}+b\overline{e_3}$.

\section*{Acknowledgments}
I am extremely grateful to Prof. T. Y. Lam for
sparking my interest in problems of this
kind and to Prof. Pavaman Murthy for
encouraging me to write this up.

\end{document}